\newcommand{\GF}{\mathop{\mathrm{GF}}}
\newcommand{\PG}{\mathop{\mathrm{PG}}}
\begin{document}
\title{Diagonal groups and arcs over groups}
\dedication{To Aart Blokhuis}
\author{R. A. Bailey \and Peter J. Cameron \and
Michael Kinyon$^*$\thanks{$^*$ Partially supported by Simons Foundation Collaboration Grant 359872 and by Fundaç\~{a}o para a Ci\^{e}ncia e a Tecnologia (Portuguese Foundation for Science and Technology) grant PTDC/MAT-PUR/31174/2017.} \and 
Cheryl E. Praeger$^\dag$\thanks{$^\dag$ Supported by Australian Research Council Discovery Grant DP160102323}}
\institute{R. A. Bailey \and Peter J. Cameron \at School of Mathematics and
Statistics, University of St Andrews, St Andrews, Fife KY16 9SS, UK
\email{rab24@st-andrews.ac.uk, pjc20@st-andrews.ac.uk} \and
Michael Kinyon \at Department of Mathematics, University of Denver, Denver
CO 80208, USA
\email{mkinyon@du.edu} \and
Cheryl E. Praeger \at Department of Mathematics and Statistics,
University of Western Australia, Perth WA 6009, Australia
\email{cheryl.praeger@uwa.edu.au}}
\maketitle

\begin{abstract}
In an earlier paper by three of the present authors and Csaba Schneider,
it was shown that, for $m\ge2$, a set of $m+1$ partitions of a set $\Omega$,
any $m$ of which are the minimal non-trivial elements of a Cartesian lattice,
either form a Latin square (if $m=2$), or generate a join-semilattice of
dimension $m$ associated with a diagonal group over a base group $G$.

In this paper we investigate what happens if we have $m+r$ partitions with
$r\geq 2$, any $m$ of which are minimal elements of a Cartesian lattice. If
$m=2$, this is just a set of mutually orthogonal Latin squares. We consider
the case where all these squares are isotopic to Cayley tables of groups, and
give an example to show the groups need not be all isomorphic. For $m>2$,
things are more restricted. Any $m+1$ of the partitions generate a
join-semilattice admitting a diagonal group over a group $G$. It may be that
the groups are all isomorphic, though we cannot prove this. Under an extra
hypothesis, we show that $G$ must be abelian and must have three
fixed-point-free automorphisms whose product is the identity. (We describe
explicitly all abelian groups having such automorphisms.) Under this
hypothesis, the structure gives an orthogonal array, and conversely in some
cases.

If the group is cyclic of prime order $p$, then the structure corresponds
exactly to an arc of cardinality $m+r$ in the $(m-1)$-dimensional projective
space over the field with $p$ elements, so all known results about
arcs are applicable. More generally, arcs over a finite field of order $q$
give examples where $G$ is the elementary abelian group of order $q$. These
examples can be lifted to non-elementary abelian groups using $p$-adic
techniques.

\keywords{diagonal group \and arc \and orthogonal array \and diagonal 
semilattice \and Frobenius group}
\subclass{20B25 \and 05B15 \and 51A45 \and 62K15 \and 94B25}
\end{abstract}

\section{Introduction}

The origin of this paper was a realisation that, as sets of mutually orthogonal
Latin squares extend the notion of Latin squares to more objects, and arcs in
finite projective spaces extend to higher dimension, there should be a way
to define and study objects realising both of these extensions. Given the
fundamental work of Aart Blokhuis in finite geometry, especially
on arcs~\cite{bbt}, we regard this as a fitting tribute to him.

Central to our work is the notion of \emph{diagonal groups} and the
structures they act on.
Diagonal groups $D(G,m)$ are one of the families of primitive permutation
groups arising in the celebrated O'Nan--Scott theorem. In this theorem, the
group $G$ is a finite simple (or characteristically simple) group.
In~\cite{bcps}, the authors considered diagonal groups with an
arbitrary group $G$ (not necessarily finite), and defined a geometric object
having the diagonal group as its automorphism group. This object was called
a \emph{diagonal semilattice}. We now outline the details.

We work within the lattice $\mathcal{P}(\Omega)$ of partitions of a
set~$\Omega$.
A \emph{Cartesian lattice} of dimension $m$ over an alphabet $A$ is defined
as follows: $\Omega=A^m$, and for any subset $J$ of $\{1,\ldots,m\}$, we
define the partition $Q_J$ of $\Omega$ in which two $m$-tuples
$(a_1,\ldots,a_m)$ and $(b_1,\ldots,b_m)$ belong to the same part if and only
if $a_j=b_j$ for all $j\notin J$. These partitions form a lattice isomorphic to
the Boolean lattice on $\{1,\ldots,m\}$ (the lattice of all subsets of
$\{1,\ldots,m\}$); the map $J\mapsto Q_J$ is an isomorphism from the
Boolean lattice to the Cartesian lattice.

\begin{definition}
The \emph{diagonal group} $D(G,m)$ can be defined as a group of permutations
of the set $\Omega=G^m$ generated by the following permutations:
\begin{itemize}
\item right translations by elements of $G^m$;
\item left translations by elements of the subgroup
$\delta(G)=\{(g,g,\ldots,g):g\in G\}$ of $G^m$;
\item automorphisms of $G$ (acting in the same way on all coordinates);
\item permutations of the coordinates;
\item the map
\[(g_1,g_2,\ldots,g_m)\mapsto(g_1^{-1},g_1^{-1}g_2,\ldots,g_1^{-1}g_m).\]
\end{itemize}
\end{definition}

For $i=1,\ldots,m$, let $G_i$ be the $i$th coordinate subgroup of $G^m$,
the set of $m$-tuples $(g_1,\ldots,g_m)$ with $g_j=1$ for $j\ne i$; and
let $G_{m+1}$ be the subgroup $\delta(G)$. Then let $Q_i$ be the partition of
$G^m$ into right cosets of $G_i$, for $i=1,\ldots,m+1$. 

The main results of \cite{bcps} can be stated as follows:

\begin{theorem}
\label{th:magic}
  The join-semilattice $\Lambda$ generated by the partitions $Q_1,\ldots,Q_{m+1}$
has the properties
\begin{itemize}
\item any $m$ of $\{Q_1,\ldots,Q_{m+1}\}$ generate a Cartesian lattice under
  join;
\item the automorphism group of $\Lambda$ is the diagonal group $D(G,m)$.
\end{itemize}
\end{theorem}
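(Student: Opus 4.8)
The plan is to prove the two bullet points separately; the second carries all the difficulty. For the first, I would make the symmetry of the construction visible by realising $\Omega=G^m$ as the right-coset space $\hat{G}/\delta$, where $\hat{G}=G^{m+1}$ and $\delta=\{(g,\dots,g):g\in G\}$ is the long diagonal, via $(g_1,\dots,g_m)\mapsto\delta(1,g_1,\dots,g_m)$. A direct computation then shows that $Q_{m+1}$ is the orbit partition on $\Omega$ of the first coordinate subgroup of $\hat{G}$ (acting by right multiplication), while $Q_i$ for $i\le m$ is the orbit partition of the $(i+1)$st coordinate subgroup; and the group $S_{m+1}$ permuting the coordinates of $\hat{G}$ normalises $\delta$, hence induces a group of permutations of $\Omega$ that permutes $\{Q_1,\dots,Q_{m+1}\}$ accordingly. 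The coordinate permutations and the displayed map in the definition of $D(G,m)$ are among these induced permutations, and generate the full $S_{m+1}$. Since any permutation of $\Omega$ carries joins of partitions to joins and $S_{m+1}$ is transitive on the $m+1$ atoms, each $m$-subset of $\{Q_1,\dots,Q_{m+1}\}$ is mapped by one of these permutations onto $\{Q_1,\dots,Q_m\}$, so it suffices to check that $Q_1,\dots,Q_m$ generate a Cartesian lattice. That is immediate from the description of $Q_i$ as ``equal outside coordinate $i$'': joining the $Q_i$ over $i\in J$ gives ``equal outside $J$'', and $J\mapsto Q_J$ is the Boolean-lattice isomorphism.

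For the second bullet, the inclusion $D(G,m)\le\mathrm{Aut}(\Lambda)$ is routine: I would verify that each listed generator of $D(G,m)$ maps the set $\{Q_1,\dots,Q_{m+1}\}$ into itself, which forces it to preserve the join-semilattice they generate. Right translations and diagonal automorphisms of $G$ fix every $Q_i$; a left translation by an element of $\delta(G)$ conjugates each coordinate subgroup $G_i$ and $\delta(G)$ into itself, so it fixes every $Q_i$ as well; coordinate permutations permute $Q_1,\dots,Q_m$ and fix $Q_{m+1}$; and the displayed map interchanges $Q_1$ with $Q_{m+1}$ and fixes the remaining $Q_i$ --- a one-line check on when two images lie in a common coset.

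The substance is the reverse inclusion $\mathrm{Aut}(\Lambda)\le D(G,m)$. Put $A=\mathrm{Aut}(\Lambda)$. Each $\phi\in A$ induces an automorphism of the semilattice $\Lambda$, hence permutes its atoms; these are precisely $Q_1,\dots,Q_{m+1}$, since the $Q_i$ are pairwise incomparable so no join of two or more of them is minimal. As $D(G,m)\le A$ already induces all of $S_{m+1}$ on the atoms, it is enough to show that the subgroup $A_1\le A$ of permutations fixing every $Q_i$ lies in $D(G,m)$. Let $\phi\in A_1$. Fixing $Q_1,\dots,Q_m$ forces $\phi$ to fix every join of them, in particular the partition $\bigvee_{j\ne i}Q_j$ whose parts are the hyperplanes $\{x\in\Omega:x_i=c\}$; reading off the $i$th coordinate of $\phi(x)$ for each $i$ then shows $\phi(g_1,\dots,g_m)=(\pi_1(g_1),\dots,\pi_m(g_m))$ for some $\pi_i\in\mathrm{Sym}(G)$. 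Composing $\phi$ with a suitable right translation, which lies in $D(G,m)\cap A_1$, we may assume $\pi_i(1)=1$ for all $i$. Now use that $\phi$ fixes $Q_{m+1}$, i.e.\ it preserves the relation ``$y_i=gx_i$ for all $i$, for some $g\in G$'': substituting tuples equal to the identity in all but one coordinate yields $\pi_1(gx)\pi_1(x)^{-1}=\pi_i(g)$ for all $g,x\in G$ and each $i\in\{2,\dots,m\}$, and setting $x=1$ gives $\pi_1=\pi_i$, so all $\pi_i$ equal a common permutation $\pi$, which then satisfies $\pi(gx)=\pi(g)\pi(x)$. Thus $\pi$ is a bijective endomorphism, i.e.\ an automorphism of $G$, and $\phi$ is a diagonal automorphism composed with a right translation, so $\phi\in D(G,m)$. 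Hence $A_1\le D(G,m)$ and $\mathrm{Aut}(\Lambda)=D(G,m)$.

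The one genuinely conceptual step, and the point I expect to be the main obstacle, is the last passage above: turning the combinatorial condition that $\phi$ preserves $Q_{m+1}$ into the algebraic conclusion that the coordinate maps $\pi_i$ agree and form a group automorphism. The rest is organising the symmetry and computing in the partition lattice. Before attempting that step I would first nail down the two facts the reduction rests on: that the atoms of $\Lambda$ are exactly $Q_1,\dots,Q_{m+1}$, with no extra minimal element appearing once $Q_{m+1}$ is adjoined to the Cartesian lattice generated by $Q_1,\dots,Q_m$; and that the coordinate-permutation action is genuinely well defined on $\Omega$ and rich enough to reach every $m$-subset of the atoms.
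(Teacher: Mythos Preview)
The paper does not prove this theorem: it is quoted, without proof, from the cited paper \cite{bcps} as background for what follows, so there is no ``paper's own proof'' here to compare your attempt against.

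That said, your outline is sound and is very much in the spirit of the argument in \cite{bcps}. The identification of $\Omega$ with $G^{m+1}/\delta$ is exactly the device that makes the $S_{m+1}$-symmetry among $Q_1,\dots,Q_{m+1}$ transparent, and your verification that the displayed map corresponds to the transposition of the first two coordinates of $G^{m+1}$ is correct. For the reverse inclusion, your reduction to the pointwise stabiliser $A_1$ of the atoms, the coordinatewise form $\phi=(\pi_1,\dots,\pi_m)$ obtained from preservation of the hyperplane partitions $\bigvee_{j\le m,\,j\ne i}Q_j$, and the extraction of the automorphism identity $\pi(gx)=\pi(g)\pi(x)$ from preservation of $Q_{m+1}$ are all correct. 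One small point of presentation: the phrase ``tuples equal to the identity in all but one coordinate'' slightly undersells the pair you are actually using, namely $(x,1,\dots,1)$ together with its $Q_{m+1}$-partner $(gx,g,\dots,g)$; the computation you wrote is the right one. The two items you flag at the end are genuine but easy: the minimal elements of $\Lambda$ are exactly the $Q_i$ because every element of $\Lambda$ is a join of some of the generators and the $Q_i$ are pairwise incomparable (all have parts of size $|G|$, and they are distinct); and the $S_{m+1}$-action on $\Omega$ is well defined precisely because coordinate permutations of $G^{m+1}$ normalise $\delta$.
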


We call $\Lambda$ a \emph{diagonal semilattice}, and denote it by 
$\mathcal{D}(G,m)$.

\begin{theorem}\label{t:main}
Let $Q_1,\ldots,Q_{m+1}$ be partitions of $\Omega$, where $m\geq2$. Suppose that
any $m$ of these partitions generate an $m$-dimensional Cartesian lattice, in
which they are the minimal non-trivial elements. Then one of the following
holds:
\begin{itemize}
\item $m=2$ and there is a Latin square $L$, unique up to paratopism, such that
$\Omega$ is the set of cells of $L$, and $Q_1,Q_2,Q_3$ are the partitions of
$\Omega$ corresponding to the rows, columns and letters of $L$;
\item $m\geq 3$ and there is a group $G$, unique up to isomorphism, such that
$Q_1,\ldots,Q_{m+1}$ generate the diagonal semilattice $\mathcal{D}(G,m)$.
\end{itemize}
\end{theorem}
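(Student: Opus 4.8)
The plan is to treat the two cases $m=2$ and $m\geq 3$ separately, since they behave quite differently. For $m=2$ there is essentially nothing to do beyond unwinding the definitions. Two partitions $Q_1,Q_2$ that generate a $2$-dimensional Cartesian lattice in which they are the atoms means precisely that $\Omega$ carries a grid structure, with $Q_1$ the ``same row'' relation and $Q_2$ the ``same column'' relation; so $\Omega$ may be identified with the set of cells of an $n\times n$ array. The hypotheses on the pairs $\{Q_1,Q_3\}$ and $\{Q_2,Q_3\}$ then say exactly that every part of $Q_3$ meets every row once and every column once, i.e.\ the parts of $Q_3$ are the symbol classes of a Latin square~$L$. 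Choosing bijections from the rows, the columns and the parts of $Q_3$ to a common symbol set produces a concrete $L$; a different choice of bijections replaces $L$ by an isotope, and permuting the roles played by $Q_1,Q_2,Q_3$ replaces $L$ by a conjugate, so $L$ is well defined up to paratopism. This is the classical correspondence between Latin squares and triples of pairwise ``orthogonal'' partitions of a grid.

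For $m\geq 3$ the real task is to manufacture the group $G$. First I would use $Q_1,\dots,Q_m$ to coordinatise: identify $\Omega$ with $A^m$ so that $Q_i$ is the partition ``agree off the $i$th coordinate'', equivalently so that each coatom $C_k:=\bigvee_{i\neq k}Q_i$ records the $k$th coordinate and $\omega\mapsto(c_1(\omega),\dots,c_m(\omega))$ is a bijection onto $A^m$. Next I would pin down how $Q_{m+1}$ lies across this grid. For each $k\leq m$ the family $\{\,Q_i : i\leq m,\ i\neq k\,\}\cup\{Q_{m+1}\}$ is one of the permitted $m$-subsets, so its atoms coordinatise $\Omega$ through the corresponding coatoms; all but one of those coatoms already occur among $C_1,\dots,C_m$, and comparing the two coordinatisations forces $Q_{m+1}$ to be \emph{linear} with respect to the grid. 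Concretely this should produce, for each ordered triple of axes, a bijection of $A$ recording how $Q_{m+1}$ links them, together with compatibility relations among these bijections coming from the remaining $m$-subsets --- the higher-dimensional analogue of coordinatising a net.

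From these data I would define a binary operation $\ast$ on $A$ (using one distinguished axis, with a suitable part of $Q_{m+1}$ playing the role of identity) and then show that $(A,\ast)$ is a group~$G$: it is automatically a quasigroup with an identity element, so the content is associativity. Associativity is exactly the closure (\emph{quadrangle}) condition forced by the $3$-dimensional Cartesian-lattice relations that hold among triples $\{Q_i,Q_j,Q_{m+1}\}$ once they sit inside a genuine $m$-subset with $m\geq 3$; for $m=2$ no such relation is available, which is why there one obtains an arbitrary Latin square. With $G$ in hand one renormalises so that $A=G$, $Q_i$ becomes the partition into right cosets of the $i$th coordinate subgroup $G_i$, and $Q_{m+1}$ becomes the partition into right cosets of $\delta(G)$; Theorem~\ref{th:magic} then identifies the semilattice generated by $Q_1,\dots,Q_{m+1}$ with $\mathcal{D}(G,m)$. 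Uniqueness of $G$ up to isomorphism holds because $G$ is intrinsic: it is the group found on an axis, and it is also recoverable from the automorphism group $D(G,m)$ supplied by Theorem~\ref{th:magic} (for $m\geq 3$ the diagonal group determines $G$ up to isomorphism).

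I expect the associativity step to be the main obstacle: everything else is bookkeeping with partitions and cosets, but converting the dimension hypothesis $m\geq 3$ into a usable algebraic identity requires care in choosing which $m$-subsets to exploit and in reading the law off the resulting bijections. A natural way to organise it is induction on $m$: restrict the whole configuration to a well-chosen sub-Cartesian-lattice of dimension $m-1$ (obtained by fixing one coordinate) to reduce to the base case $m=3$, which one then settles by an explicit ``Latin cube'' computation. Checking that the restricted configuration again satisfies the hypotheses --- in particular that $Q_{m+1}$ restricts to something of the right form --- is itself the delicate point of that reduction.
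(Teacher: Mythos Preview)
The paper you are working from does not contain a proof of this theorem at all: it is quoted, together with Theorem~\ref{th:magic}, as one of ``the main results of \cite{bcps}'', and is used throughout the rest of the paper as an input. So there is no in-paper proof against which to compare your attempt; any genuine comparison would have to be made against the argument in \cite{bcps}.

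That said, your outline is a reasonable plan rather than a proof. The $m=2$ case is handled correctly. For $m\ge3$ you have correctly identified the architecture one expects (coordinatise via $Q_1,\dots,Q_m$; read off transition bijections from how $Q_{m+1}$ sits across the grid; build a loop on the alphabet; prove associativity from the higher-dimensional Cartesian hypotheses; then match with $\mathcal{D}(G,m)$), and you are right that the crux is associativity and that the natural route is to reduce by induction to a careful $m=3$ ``Latin cube'' analysis. But as written the proposal does not actually carry out any of the nontrivial steps: you do not specify the bijections, you do not give the definition of $\ast$, and you do not verify that restricting to a fibre of one coordinate really returns a configuration of the same type in dimension $m-1$ (you flag this yourself as ``the delicate point''). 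Until those steps are executed, what you have is a strategy, not a proof.
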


A \emph{paratopism} between two Latin squares is most easily defined here as a
bijection between the set of cells of the first and that of the second which
carries the three partitions (letters, rows and columns) of the first set to
the three partitions of the second set in some order. If rows map to rows,
columns to columns and letters to letters, the map is called an
\emph{isotopism}.

In this paper, we consider what happens when we have $m+r$ partitions
satisfying the hypotheses of this theorem with larger values of $r$. We will 
show that
\begin{itemize}
\item If $m=2$, then the partitions form the rows, columns, and letters in
$r$ mutually orthogonal Latin squares. The case where all the Latin squares
are isotopic to Cayley tables of groups is particularly interesting, and we
give an example with $r=2$ where the four groups fall into three different
isomorphism classes.
\item If $m\ge3$ and $r\ge2$, then under an additional assumption (which we
call \emph{regularity}) the groups $G$ obtained by applying
Theorem~\ref{t:main} to any $(m+1)$-tuple of partitions are
all isomorphic, are abelian, and this unique abelian group admits three
fixed-point-free automorphisms whose product is the identity. We describe all
abelian groups having such automorphisms, and give examples based on $p$-adic
lifting of arcs in finite projective spaces. We also describe the relation
of our work to orthogonal arrays.
\end{itemize}

We introduce some notation. Let $t(m,n)$ be the greatest value of $r$ for 
which such a set of partitions of a set of cardinality $n^m$ exists. (We
assume that $m\ge2$ and $n\ge2$.) For $m=2$, this is the maximum number of
mutually orthogonal Latin squares of order $n$ (usually denoted by $N(n)$ in
the literature). Further, when $m=2$ we denote by $t_g(2,n)$ the maximum in
the case where all the Latin squares obtained by taking the partitions three
at a time are Cayley tables of groups.
(We do not need to define this for $m\geq 3$, because Theorem~\ref{t:main}
shows that, in this case, any set of $m+1$ of the partitions defines a group.)
For any given group~$G$, we also
denote by $T(m,G)$ the maximum number $r$ for which there are $m+r$ partitions
satisfying our hypothesis such that any $m+1$ of them define a group
isomorphic to $G$.  Thus $T(m,G) \leq t(m, \left| G \right|)$. 

Part of our purpose here is to consider these functions and give some upper and
lower bounds. We will see that our problem involves several other parts of
combinatorics and finite geometry, including mutually orthogonal Latin
squares, the Hall--Paige conjecture, and arcs in finite projective spaces.

\section{The case $m=2$}
\label{sec:2}

Suppose that we have a collection of $r+2$ partitions of $\Omega$ with the
property that any two of them give $\Omega$ the structure of an $n\times n$
grid. Any further partition can be represented by a set of letters 
corresponding to the parts of the partition, and the hypothesis implies that
the letters constitute a Latin square of order $n$ on the square array.
Further, any two of the resulting Latin squares are orthogonal. So we have
precisely a set of $r$ mutually orthogonal Latin squares (MOLS) of order~$n$.

Note that the maximum number of orthogonal Latin squares of order $n$ satisfies
$t(2,n)\le n-1$, with equality if $n$ is a prime power: see \cite[p.~158]{dk}.

A set of $r$ MOLS defines $r+2\choose 3$ Latin squares, since any triple of
the partitions gives such a square. We will say that we have a set of
\emph{mutually orthogonal group squares} (MOGS) if all of these Latin squares
are isotopic to Cayley tables of groups.

We note that there is a test, the \emph{quadrangle criterion}, to determine
whether a Latin square is isotopic to a Cayley table of a group, due to
Frolov~\cite{frolov} (see~\cite[Theorem 1.2.1]{dk} and the following text for
discussion); and a theorem of Albert~\cite[Theorem~2]{albert} shows
that, if so, then the group is unique up to isomorphism.

MOLS have been studied since Euler, and we have nothing to add in general.
But note that the classical set of $q-1$ MOLS of order $q$ (for prime powers
$q$) associated with the Desarguesian projective plane of order $q$ does indeed
form a set of MOGS, where all the groups are isomorphic to the additive group
of the finite field of order $q$. So $t_g(2,n)\le n-1$, with equality if $n$ is
a prime power; and $T(2,G)=q-1$ if $q$ is a prime power and $G$ is elementary
abelian of order $q$.

More interesting to us is a remarkable example of two MOLS of order $8$
where all of the Latin squares are Cayley tables of groups, but the groups
are not all isomorphic:

\[\begin{array}{|c|c|c|c|c|c|c|c|}
\hline
11 & 22 & 33 & 44 & 55 & 66 & 77 & 88\\
\hline
42 & 34 & 21 & 13 & 86 & 78 & 65 & 57\\
\hline
53 & 61 & 74 & 82 & 17 & 25 & 38 & 46\\
\hline
84 & 73 & 62 & 51 & 48 & 37 & 26 & 15\\
\hline
35 & 47 & 16 & 28 & 71 & 83 & 52 & 64\\
\hline
76 & 85 & 58 & 67 & 32 & 41 & 14 & 23\\
\hline
27 & 18 & 45 & 36 & 63 & 54 & 81 & 72\\
\hline
68 & 56 & 87 & 75 & 24 & 12 & 43 & 31\\
\hline
\end{array}\]

The four groups are as follows. Here $G_i$ denotes the group obtained by
omitting
the $i$th of the four partitions (rows, columns, first letter, second letter);
so $G_4$ and $G_3$ denote the groups whose multiplication tables are given by
the first and second letters in the array.
\begin{description}
\item[$G_4$:] $C_2\times C_2\times C_2$
\item[$G_3$:] $D_8$
\item[$G_2$:] $C_2\times C_4$
\item[$G_1$:] $D_8$
\end{description}

The proof of the Hall--Paige conjecture~\cite{hp} by Wilcox, Evans and 
Bray~\cite{wilcox,evans,bccsz} shows that the Cayley table of a group $G$
has an orthogonal mate if and only if the Sylow $2$-subgroups of $G$ are
trivial or non-cyclic. In particular, no group of order congruent to $2$
(mod~$4$) satisfies this condition, so in the earlier language we have the
second part of the following proposition.

\begin{proposition}
\begin{itemize}
\item If $q$ is a prime power and $G$ is an elementary abelian group of order
$q$, then $T(2,G)=q-1$; in particular, $t_g(2,q)=q-1$.
\item If $n\equiv2\!\!\!\pmod{4}$, then $t_g(2,n)=1$.
\end{itemize}
\end{proposition}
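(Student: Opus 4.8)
The plan is to prove each item by combining the relevant lower-bound construction with an impossibility result: the elementary bound $t(2,n)\le n-1$ for the first, and the Hall--Paige theorem recalled above for the second.

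\emph{First item.} From the definitions, $T(2,G)\le t(2,|G|)=t(2,q)$, and a set of mutually orthogonal Latin squares of order $q$ has size at most $q-1$, so $t(2,q)\le q-1$ and hence $T(2,G)\le q-1$. For the reverse inequality I would take the classical set of $q-1$ mutually orthogonal Latin squares of order $q$ given by the $q+1$ parallel classes of lines of the Desarguesian affine plane of order $q$: as recalled before the proposition, any three of these parallel classes yield a Latin square isotopic to the Cayley table of the additive group $(\GF(q),+)$, which is the elementary abelian group $G$ of order $q$. This is a set of $q-1$ partitions meeting our hypothesis in which every triple defines a group isomorphic to $G$, so $T(2,G)\ge q-1$, and therefore $T(2,G)=q-1$. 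Since the same $q-1$ squares form in particular a set of MOGS, $t_g(2,q)\ge q-1$, while $t_g(2,q)\le t(2,q)\le q-1$; so $t_g(2,q)=q-1$.

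\emph{Second item.} We have $t_g(2,n)\ge1$, since the Cayley table of the cyclic group $C_n$ is on its own a set of MOGS with $r=1$. For the upper bound, suppose $t_g(2,n)\ge2$; then there exist two orthogonal Latin squares $L_1,L_2$ of order $n$ forming a set of MOGS, so $L_1$ is isotopic to the Cayley table of a group $G$ with $|G|=n$, and $L_1$ has an orthogonal mate, namely $L_2$. The existence of an orthogonal mate is preserved by isotopism: if $(\alpha,\beta,\gamma)$ is an isotopism carrying $L_1$ to the Cayley table of $G$, then applying $(\alpha,\beta)$ to the rows and columns of $L_2$ (with any relabelling of its symbols) produces an orthogonal mate of that Cayley table. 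By the Hall--Paige theorem --- only the elementary direction is needed here, that a group with non-trivial cyclic Sylow $2$-subgroup has no orthogonal mate --- the Sylow $2$-subgroups of $G$ are trivial or non-cyclic. But $n\equiv 2\pmod{4}$ forces the Sylow $2$-subgroup of $G$ to be cyclic of order $2$, a contradiction. Hence $t_g(2,n)\le1$, and together with the lower bound $t_g(2,n)=1$.

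There is no genuine obstacle here beyond assembling the definitions of $t$, $t_g$ and $T$, the standard MOLS bound $t(2,q)\le q-1$, the isotopism-invariance of the existence of an orthogonal mate, and the easy half of Hall--Paige. The two points that warrant a little care are stating the isotopism-invariance correctly (the isotopism is applied to $L_2$ only in its row and column components, with the symbol component left free) and the classical fact --- asserted in the discussion preceding the proposition --- that every triple of parallel classes of $\mathrm{AG}(2,q)$ yields the additive group of $\GF(q)$.
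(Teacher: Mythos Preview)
Your proof is correct and follows exactly the argument the paper gives in the discussion immediately preceding the proposition: the Desarguesian construction together with the bound $t(2,q)\le q-1$ for the first item, and the easy direction of Hall--Paige for the second. One slip to fix: in the first item you wrote ``a set of $q-1$ partitions'' where you mean the $q+1$ parallel classes (so $m+r=q+1$ and $r=q-1$); the conclusion is unaffected.
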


\begin{problem}
For $r>1$, is there a set of $r$ MOGS such that all $r+2\choose 3$ groups are
pairwise non-isomorphic?
\end{problem}

\begin{problem}
Given a group $G$, what is the largest $r$ such that there exists a set
of $r$ MOGS for which all $r+2\choose 3$ groups are isomorphic to $G$? That is,
what is $T(2,G)$?
\end{problem}

Owens and Preece \cite{OP1,OP2} investigated the number of different species
of Latin squares that occur in the seven different affine planes of order~$9$. 
(A \emph{species} is an equivalence class under paratopism.)
Two of the affine planes have just one species of Latin square, which is the
Cayley table of $C_3 \times C_3$ in both cases.
A third affine plane has some choices of which two partitions define rows
and columns for which all the Latin squares are Cayley tables of
$C_3 \times C_3$.
The
Cayley table of $C_9$ never occurs.  Egan and Wanless \cite{EW} repeated this
investigation, and extended it to other sets of MOLS.  For a set of three
MOLS,  there are ten ways of choosing three of the five partitions to form a
single  Latin square. For MOLS of order~$9$, Egan and Wanless found
that the number of different species occuring can be any integer in
$\{1, \ldots, 10\}$.

Another result bearing on this question can be found in the paper of
Franceti\'c, Herke and Wanless~\cite{fhw}. They define the notion of the
\emph{parity} of a Latin square, and prove (among other things) that, if
$n\equiv2$ (mod~$4$), then there is no complete set of $n-1$ Latin squares
of order $n$ in which all ${n+1\choose3}$ Latin squares are isotopic. Thus,
if we extend our notation $T(2,G)$ to quasigroups, so that $T(2,Q)$ is the
maximum number of MOLS in which all definable Latin squares are paratopic to
the Cayley table of the quasigroup $Q$, then for $|Q|\equiv2$ (mod~$4$) we have
$T(2,Q)<|Q|-1$.

\section{The case $m>2$}

As noted in \cite{bcps}, there are several definitions of ``Latin cube'' in
the literature; the one relevant to the proof of the Main Theorem in that
paper is one of these, and not the most popular. The situation for
orthogonal Latin cubes is if anything worse,
see~\cite{ds,mw,mullen,potapov,trenkler}. To avoid causing
more confusion, we will use the name \emph{diagonal semilattices} for the
objects appearing in \cite{bcps}.
So the objects to be studied here are sets of
\emph{mutually orthogonal diagonal semilattices}, or MODS for short.

Thus, a set of $r$ MODS of \emph{dimension} $m$ and \emph{order} $n$ is a
collection of $m+r$ partitions $Q_1,\ldots,Q_{m+r}$ of a set $\Omega$ of
cardinality $n^m$, with the property that any $m$ of these partitions are
the minimal non-trivial elements in an $m$-dimensional Cartesian lattice
on $\Omega$. According to \cite{bcps}, if $r=1$ (and $m>2$) then there is a
group $G$ of order $n$, unique up to group isomorphism,
such that $\Omega$ can be identified with $G^m$, and the
partitions $Q_i$ are the coset partitions of $\Omega$ with respect to subgroups
$G_1,\ldots,G_{m+1}$, where $G_i$ acts by right multiplication on the $i$th
coordinate of elements of $G^m$, fixing the entries in all other coordinates,
for $i=1,\ldots,m$, and $G_{m+1}$ acts by left multiplication of all entries by 
the same group element:
\[x\colon (g_1,\ldots,g_m)\mapsto(x^{-1}g_1,\ldots,x^{-1}g_m).\]
(The $x^{-1}$ is to ensure that the requirements for a (right) action are
satisfied.)

Let us say that a set of $r$ MODS is \emph{regular} if all the partitions
are right coset partitions of subgroups of order $n$ in $G^m$. The main
problem, which we have not been able to solve, is:

\begin{problem}
Does there exist a non-regular set of MODS (with $m>2$ and $r>1$)?
\end{problem}

\begin{proposition}\label{p:mods}
In a regular set of MODS, every $(m+1)$-tuple of partitions gives rise to
a diagonal semilattice over a group $G$ which is independent of the tuple
of partitions chosen. Moreover, $G$ is an abelian group which admits three
fixed-point-free automorphisms whose product is the identity.
\end{proposition}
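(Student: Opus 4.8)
The plan is to fix, once and for all, the identification $\Omega=G^m$ furnished by Theorem~\ref{t:main} applied to the tuple $(Q_1,\dots,Q_{m+1})$: thus $Q_i$ is the partition of $G^m$ into right cosets of the $i$th coordinate subgroup $G_i$ for $i\le m$, $Q_{m+1}$ into right cosets of $\delta(G)=G_{m+1}$, and, by regularity, each remaining $Q_j$ ($j\ge m+2$) into right cosets of a subgroup $H_j\le G^m$ of order $n$. Write $\pi_i\colon G^m\to G$ for the $i$th coordinate projection. The step on which everything rests is that each $H_j$ is a \emph{graph of automorphisms}: for any $i\le m$ the $m$-element subfamily obtained from $\{Q_1,\dots,Q_m,Q_j\}$ by deleting $Q_i$ is a set of Cartesian atoms, so the atom $Q_{H_j}$ meets the opposite coatom $\bigvee_{k\ne i}Q_{G_k}=Q_{\ker\pi_i}$ in the bottom partition; hence $H_j\cap\ker\pi_i=1$, $\pi_i$ restricts to an isomorphism on $H_j$, and letting $i$ range gives $H_j=\{(g,f^{(j)}_2(g),\dots,f^{(j)}_m(g)):g\in G\}$ with $f^{(j)}_l\in\mathrm{Aut}(G)$ and $f^{(m+1)}_l=\mathrm{id}$.

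For the independence statement I would first treat $(m+1)$-subfamilies containing $\{Q_1,\dots,Q_m\}$. The coordinatewise automorphism $\theta_j=(\mathrm{id},(f^{(j)}_2)^{-1},\dots,(f^{(j)}_m)^{-1})$ of $G^m$ fixes each $Q_i$ ($i\le m$) and sends $Q_j$ to $Q_{m+1}$, so it sends the join-semilattice $\langle Q_1,\dots,Q_m,Q_j\rangle$ isomorphically to $\mathcal{D}(G,m)$; by the uniqueness clause of Theorem~\ref{t:main} its group is $G$, and the same argument works for any set $\{Q_1,\dots,Q_m,P\}$ that is a valid set of MODS with $P$ a right-coset partition. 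To reach an arbitrary $(m+1)$-subfamily $S$ I would propagate along the connected graph whose vertices are the $(m+1)$-subfamilies and whose edges are single-element swaps. Suppose $B$ is an $(m+1)$-subfamily with $\langle B\rangle=\mathcal{D}(G,m)$ and $B'$ is obtained from $B$ by swapping one partition. By the structure theory of \cite{bcps}, $\mathcal{D}(G,m)$ has exactly $m+1$ minimal non-trivial elements, the sets of Cartesian atoms inside it are exactly the $m$-subsets of these, and $D(G,m)=\mathrm{Aut}(\mathcal{D}(G,m))$ induces the full symmetric group on them; hence $B$ is the set of those $m+1$ elements, and we may choose $\rho\in D(G,m)$ carrying $B\cap B'$ to $\{Q_1,\dots,Q_m\}$. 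Since elements of $D(G,m)$ carry right-coset partitions to right-coset partitions and preserve the Cartesian-atoms property, $\rho(B')=\{Q_1,\dots,Q_m,\rho(P)\}$ is a valid set of MODS of right-coset partitions containing $\{Q_1,\dots,Q_m\}$, so $\langle\rho(B')\rangle=\mathcal{D}(G,m)$ by the previous case, and applying $\rho^{-1}$ gives $\langle B'\rangle=\mathcal{D}(G,m)$. Walking from $\{Q_1,\dots,Q_{m+1}\}$ to $S$ then shows every $(m+1)$-subfamily has group $G$.

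To produce the three fixed-point-free automorphisms I would use, for each pair $i\ne j$ in $\{1,\dots,m\}$, the $m$-element subfamily $T_{i,j}=\{Q_{m+1},Q_{m+2}\}\cup\{Q_k:k\le m,\ k\notin\{i,j\}\}$, which is a set of Cartesian atoms. The join of all its atoms except $Q_{H_{m+2}}$ is the coset partition of $\langle\delta(G),\,G_k\ (k\le m,\,k\notin\{i,j\})\rangle=\{x\in G^m:x_i=x_j\}$, and the Cartesian condition forces this coatom to meet the atom $Q_{H_{m+2}}$ in the bottom, i.e.\ $\{x_i=x_j\}\cap H_{m+2}=1$, i.e.\ $\mathrm{Fix}(f_i^{-1}f_j)=1$, where $f_l:=f^{(m+2)}_l$ and $f_1:=\mathrm{id}$. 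Thus $f_i^{-1}f_j$ is fixed-point-free for all $i\ne j$; taking $i=1$ and then $i,j\in\{2,3\}$ (where $m\ge3$ is used) the automorphisms $f_2$, $f_2^{-1}f_3$, $f_3^{-1}$ are fixed-point-free and their product is the identity.

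For commutativity I would examine $N:=\delta(G)H_{m+2}$. Within the Cartesian lattice on $T_{i,j}$, the distinct atoms $Q_{m+1}$ and $Q_{H_{m+2}}$ meet in the bottom, so $\delta(G)\cap H_{m+2}=1$; and $Q_{m+1}\vee Q_{H_{m+2}}$ is a rank-$2$ element, so $\langle\delta(G),H_{m+2}\rangle$ has order $n^2$; hence $N$ is a subgroup of order $n^2$. Comparing this rank-$2$ element with the atoms $Q_k$ and with their total join $\bigvee_{k\le m,\,k\notin\{i,j\}}Q_k$ (a complementary element in that lattice) gives $N\cap G_l=1$ for each $l\le m$ (choose the pair to avoid $l$, using $m\ge3$) and $N\cap\ker\pi_i\cap\ker\pi_j=1$ for every pair $i\ne j$. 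Setting $K_l:=N\cap\ker\pi_l$, each $K_l$ is a normal subgroup of $N$ of order $n$ (it is the intersection of $N$ with the kernel of a homomorphism, and $N\supseteq\delta(G)$ projects onto every coordinate), and $K_i\cap K_j=N\cap\ker\pi_i\cap\ker\pi_j=1$, so $N=K_i\times K_j$ and $[K_i,K_j]=1$. Since $K_i$ meets every $\ker\pi_l$ ($l\ne i$) trivially, $\pi_l$ restricts to an isomorphism $K_i\to G$ for each such $l$; writing $K_i=\{x:\pi_i(x)=1,\ \pi_l(x)=\theta^{(i)}_l(\pi_j(x))\ \forall l\}$ and $K_j$ analogously, with $\theta^{(i)}_l,\theta^{(j)}_l\in\mathrm{Aut}(G)$, and fixing any $l\notin\{i,j\}$ (which exists since $m\ge3$), the vanishing of the $l$th coordinate of a commutator of elements of $K_i$ and $K_j$ says that $\theta^{(i)}_l(s)$ and $\theta^{(j)}_l(t)$ commute for all $s,t\in G$; as $\theta^{(i)}_l,\theta^{(j)}_l$ are surjective, $G$ is abelian.

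I expect the main obstacle to be the propagation argument of the second paragraph: one must carefully separate the ambient symmetric group on partitions of $\Omega$ from $\mathrm{Aut}(\mathcal{D}(G,m))$, invoke the right facts from \cite{bcps} about which partitions of $\mathcal{D}(G,m)$ can be Cartesian atoms, and verify at every swap that the transported configuration is still a genuine set of MODS. By comparison the fixed-point-free and commutativity arguments are short once the auxiliary subgroup $N$ and the trivial-intersection identities are in hand.
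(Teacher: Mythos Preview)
Your arguments for the fixed-point-free automorphisms and for commutativity of $G$ are correct, but they are considerably more involved than necessary, and your propagation argument for the independence of $G$ has a genuine gap.

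The gap is exactly where you anticipated it. You need elements $\rho\in D(G,m)$ to carry right-coset partitions to right-coset partitions, so that the ``previous case'' applies to $\rho(P')$. But the generator $\tau\colon(g_1,\dots,g_m)\mapsto(g_1^{-1},g_1^{-1}g_2,\dots,g_1^{-1}g_m)$ is not a group homomorphism of $G^m$ when $G$ is nonabelian, and in general $\tau(H)$ need not be a subgroup; so $\rho(P')$ need not be a right-coset partition, and you cannot invoke the graph-of-automorphisms description for it. (There is also a notational slip: you write $\langle B'\rangle=\mathcal{D}(G,m)$ as an equality of semilattices on $\Omega$, which is impossible once $B'$ contains an atom $P'\notin\{Q_1,\dots,Q_{m+1}\}$; you must mean ``isomorphic to'', but that does not rescue the argument.)

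The paper avoids this entirely by reversing your order of attack and using a much shorter route to commutativity. The key observation you are missing is that, by regularity, for \emph{any} $m$-subset $\{i_1,\dots,i_m\}$ the Cartesian-lattice hypothesis forces $G_{i_1},\dots,G_{i_m}$ to be an \emph{internal} direct product decomposition of $G^m$ (the join condition gives $\langle G_{i_1},\dots,G_{i_m}\rangle=G^m$, and the meet conditions give the required trivial intersections). Direct factors commute elementwise; since any two of the $G_i$ lie in a common $m$-subset, \emph{every} pair $G_i,G_j$ commutes elementwise. Now take the two ``extra'' subgroups $G_{m+1}=\delta(G)$ and $G_{m+2}$: they commute elementwise, and $\pi_1$ restricts to an isomorphism of each onto $G$, so $G$ itself is abelian. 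Once $G$ is abelian, every $H_j$ is normal, each $m$-subset gives a direct decomposition with factors isomorphic to $G$, and the $(m{+}1)$st subgroup is a diagonal in those coordinates; hence every $(m{+}1)$-tuple yields $\mathcal{D}(G,m)$ without any propagation machinery. Your subgroup $N=\delta(G)H_{m+2}$ and kernel decomposition then become unnecessary (though they do work), and the fixed-point-free triple drops out exactly as you wrote from the conditions $G_iG_{m+1}G_{m+2}=G^m$.
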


\begin{proof}
It suffices to prove this in the case $m=3$, $r=2$, which we assume from 
now on.

Each partition $Q_i$ is the coset partition corresponding to a subgroup $G_i$
of $G^3$, where we can assume that $G_1,G_2,G_3$ are the coordinate
subgroups, as defined above. This implies that $G_1$, $G_2$ and $G_3$ pairwise
commute elementwise. Since the choice of these three subgroups was arbitrary,
we see that $G_i$ and $G_j$ commute elementwise for any choice of $i,j$.

Any further partition must be the coset 
partition of a subgroup intersecting the product of fewer than $m$ of these
subgroups in the identity. Such a subgroup must be a ``diagonal'' of the
form $\{(g,g^\alpha,g^\beta):g\in G\}$, where $\alpha$ and $\beta$ are
automorphisms of $G$. Moreover we may take $G_4$ to be the usual diagonal
subgroup, defined by the choices $\alpha=\beta=1$.

Now consider $G_5$, and write it in the form
\[G_5=\{(g,g^\alpha,g^\beta):g\in G\}\]
for some automorphisms $\alpha$ and $\beta$.

Now $G_5$ must commute with $G_4$ elementwise.
But the projection onto the first coordinate
induces an isomorphism on both $G_4$ and $G_5$, with image $G$ in both cases;
so $G$ is abelian.

By definition, any three of $G_1$, $G_2$, $G_3$, $G_4$ and $G_5$ generate
their direct product $G^3$. Now consider $G_3G_4G_5$. Since 
\[G_3G_4=\{(g,g,h):g,h\in G\},\]
we see that the only solution of $g^\alpha=g$ must be $g=1$; in other words,
$\alpha$ is a fixed-point-free automorphism.
Replacing $G_3$ by $G_2$ and $G_1$ in turn, the same argument
shows that $\beta$ and $\alpha^{-1}\beta$ are also fixed-point-free
automorphsms. Putting $\gamma=\beta^{-1}\alpha$, we see that $\alpha$,
$\beta^{-1}$ and $\gamma$ are fixed-point-free automorphisms whose product is
the identity, as required.
\end{proof}

It is possible to describe the abelian groups which have such triples of
automorphisms:

\begin{proposition}
The following are equivalent for finite abelian groups $G$:
\begin{enumerate}
\item $G$ admits three fixed-point-free automorphisms whose product is the
identity;
\item if $G$ is written as a direct product of cyclic groups of prime power
orders, then factors whose order is a power of $2$ or of $3$ occur with
multiplicity greater than~$1$.
\end{enumerate}
\end{proposition}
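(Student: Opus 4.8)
The plan is to prove each implication separately, working coordinate-wise by using the fact that $\mathrm{Aut}(G)$ respects the primary decomposition $G = \prod_p G_p$. I would first reduce to the case where $G$ is a $p$-group, since a triple of fixed-point-free automorphisms of $G$ with product the identity is equivalent to such a triple on each $G_p$ simultaneously: an automorphism $\alpha$ is fixed-point-free on $G$ iff its restriction to each $G_p$ is, and the product condition likewise factors. So the equivalence will follow once I show, for a finite abelian $p$-group $H = \prod_{i} C_{p^{e_i}}$, that $H$ admits three fixed-point-free automorphisms with product $1$ if and only if either $p \notin \{2,3\}$, or $p \in \{2,3\}$ and no cyclic factor occurs with multiplicity exactly~$1$.

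For the direction (b)$\Rightarrow$(a), I would construct the automorphisms explicitly. When $p \notin \{2,3\}$, the field $\GF(p)$ contains a primitive cube root of unity $\omega$ (if $p \equiv 1 \pmod 3$) — but more robustly, for \emph{any} $p \ge 5$ one can pick $\lambda \in \{2,\ldots,p-2\}$ with $\lambda \ne 1$ and $1-\lambda \ne 0$, i.e. scalar multiplication by $\lambda$ and by $\lambda^{-1}$... wait, the product must be the identity, so instead I would take $\alpha = $ multiplication by $a$, $\beta = $ multiplication by $b$, $\gamma = $ multiplication by $c$ on each cyclic factor (scalar multiplication by a unit is a well-defined automorphism of $C_{p^e}$), requiring $a,b,c$ to be units of $\mathbb{Z}/p^e\mathbb{Z}$ with $a-1, b-1, c-1$ also units and $abc = 1$. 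Such $a,b,c$ exist precisely when $p \ge 5$: choose $a$ a unit with $a \not\equiv 1$, $b$ a unit with $b\not\equiv 1$, $ab \not\equiv 1$, and $a+b \not\equiv ab$ — a counting argument over the residue field shows this is possible as soon as $p \ge 5$. When $p \in \{2,3\}$ and each cyclic type $C_{p^e}$ appears with multiplicity $\ge 2$, scalar maps no longer suffice, so on each homogeneous block $C_{p^e}^{k}$ (with $k \ge 2$) I would instead use a fixed-point-free matrix in $\mathrm{GL}_k(\mathbb{Z}/p^e\mathbb{Z})$: take a companion-type matrix of an appropriate polynomial (e.g. a primitive polynomial giving an element of order dividing $p^k - 1$ over $\GF(p)$, lifted), ensuring $\alpha$, $\beta = \alpha^{\,j}$ and $\gamma = \alpha^{-1-j}$ are all fixed-point-free, i.e. $1$ is not an eigenvalue of any of them; for $k \ge 2$ this can always be arranged.

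For the harder direction (a)$\Rightarrow$(b), I need to show that if some cyclic factor $C_{p^e}$ (with $p \in \{2,3\}$) occurs with multiplicity exactly~$1$, then no such triple exists. Reducing modulo the Frattini-type subgroup, it suffices to handle the quotient, and ultimately the obstruction lives in the $\GF(p)$-vector space $G/pG$... but multiplicity is not visible there. The right move is: if $C_{p^e}$ occurs exactly once, then the subgroup $p^{e-1}G$ has a \emph{canonical} one-dimensional $\GF(p)$-subspace fixed setwise by all of $\mathrm{Aut}(G)$ — namely the image of the (unique) $C_{p^e}$ factor in the socle layer at level $e-1$ — equivalently, $\mathrm{Aut}(G)$ acts on the graded pieces $p^{j}G/p^{j+1}G$ and the relevant piece acquires a distinguished line. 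On that distinguished $1$-dimensional piece each automorphism acts as a scalar in $\GF(p)^\times$, which is cyclic of order $p-1 \in \{1,2\}$. Fixed-point-freeness forces that scalar to be $\ne 1$; but for $p=2$ the only nonzero scalar is $1$, an immediate contradiction, and for $p = 3$ the scalars $\ne 1$ are just $\{-1\}$, whose product over three automorphisms is $(-1)^3 = -1 \ne 1$, again a contradiction. The main obstacle is making precise and correctly stated the claim that a multiplicity-one cyclic factor forces an $\mathrm{Aut}(G)$-invariant one-dimensional section on which automorphisms act by scalars; I would isolate this as a lemma about $\mathrm{Aut}$ of a finite abelian $p$-group (proved via the Ulm-type invariants / the action on $p^jG \cap G[p]$), after which the numerical contradiction for $p \in \{2,3\}$ is immediate.
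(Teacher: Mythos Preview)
Your approach is correct and essentially matches the paper's: both reduce to $p$-groups via the primary decomposition, and for (a)$\Rightarrow$(b) both pass to a one-dimensional characteristic subquotient (you via the Ulm section $(p^{e-1}G\cap G[p])/(p^eG\cap G[p])$ directly, the paper by first quotienting by $G[p^{e-1}]$ to reduce to $e=1$ and then taking $G[p]/(G^p\cap G[p])$) on which the three induced automorphisms must act as nonidentity scalars in $\GF(p)^\times$, giving the contradiction for $p\in\{2,3\}$. For (b)$\Rightarrow$(a) the paper simply asserts that cyclic $p$-power groups with $p\ge5$ and homocyclic blocks $(C_{p^e})^d$ with $d>1$ admit such triples, so your explicit scalar and lifted companion-matrix constructions actually supply more detail than the paper's own proof.
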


\begin{proof}
Let $G$ be such a group. We will use the principle that, if
$H$ and $K$ are characteristic subgroups of $G$ with $H\leq K$, then
$K/H$ admits three fixed-point-free automorphisms whose product is the
identity also. So if $G$ has this property, so do its Sylow subgroups (its
primary components); so we may assume that $G$ is a $p$-group.

Suppose that $p=2$ or $p=3$, and that in the expression for $G$ as a direct
product of cyclic groups, some cyclic group (say $C_{p^e}$) occurs with
multiplicity~$1$. Taking the quotient by $\{g\in G:g^{p^{e-1}}=1\}$, we may
assume that $e=1$. Now let $K$ be the set of elements of order dividing $p$,
and $H=G^p$; then $K/H\cap K$ is cyclic of order $p$. But it is easy to see that
cyclic groups of orders $2$ and $3$ do not have triples of automorphisms as
required.

In the other direction, cyclic groups of $p$-power order with $p\geq5$,
and groups $(C_{p^e})^d$, for $p>3$ and $d>1$, do admit such triples.
\end{proof}

In particular, for any $n$ not congruent to $2$ mod~$4$ or to $\pm3$ mod~$9$,
there is an abelian group of order $n$ with this property.
One example is the direct product of elementary abelian groups, whose
exponent is square-free; the condition on~$n$ ensures that, for $p=2$
and $p=3$, the Sylow $p$-subgroup is either trivial or non-cyclic.

\medskip

The examples to be described in the following sections are all regular in the
sense defined in this section.

\section{Orthogonal arrays}

If $G$ is an abelian group then its \textit{dual group} $G^*$ consists of the
irreducible complex characters of $G$, and is isomorphic to $G$.  These are
frequently used by statisticians in factorial design.  For example, if
$G=C_p^4$ for some prime $p$, then typically $G$ is written as
$\langle a\rangle  \times \langle b \rangle \times \langle c \rangle
\times \langle d\rangle$ and $G^*$ as $\langle A \rangle \times \langle B
\rangle \times \langle C \rangle \times \langle D\rangle$, where $A$ simply
picks out the power of $a$ and raises $\exp(2\pi\mathrm{i}/p)$ to that power.

The elements of $G^*$ can be thought of as partitions of $G$.
(Strictly speaking, the character $A$ defines the partition of $G$ whose parts
are the inverse images of each complex number in the image of $A$.)
If $G$ is the direct product of $m$ abelian groups of order~$n$, then a set of
$m+r$ such partitions of $G$ is called an \textit{orthogonal array of
strength~$m$} and \textit{index~$1$} if any $m$ of them  form the maximal
elements in a Cartesian lattice on $G$. 

More generally, an orthogonal array with $k$ factors having strength $m$ and
index $\lambda$ over an alphabet $A$ of size~$n$
is a set of $k$-tuples of elements of $A$
with the property that, given any $m$ distinct coordinates $i_1,\ldots,i_m$ and
any $m$ arbitrary elements $a_1,\ldots,a_m$ of $A$, there are exactly $\lambda$
tuples having $a_j$ in position $i_j$ for $j=1,\ldots,m$.
The numbers $n$ and $k$ are sometimes called the \emph{number of levels}
and \emph{number of factors} respectively.

We are only concerned
with index $1$. In this case, each coordinate defines a partition of the set
$\Omega$ of $k$-tuples according to the letter in that coordinate, and this
set of $k$ partitions has the property that any $m$ of them are the
\emph{maximal} elements in a Cartesian lattice of dimension $m$.

For example, if $G=C_p^4$ with $p\geq 5$ then
$\{A,B,C,D,ABCD,AB^2C^3D^4\}$ is an orthogonal array of strength $4$.
This is usually denoted $\mathrm{OA}(p^m,6,p,4)$: see \cite{HSS}.

In fact, the complete set of MOLS of order~$9$ given by Fisher and Yates in
\cite{FY} was constructed in this way, using $C_3^4$ as the underlying set.
The rows are labelled by pairs of values of $A$ and $B$, while the columns
are labelled by pairs of values of $C$ and $D$.  The letters in the first
square are identified by pairs of values of $AC^2$ and $BD^2$; and so on.  It
is thus no surprise that all eight Latin squares are Cayley tables of
$C_3 \times C_3$.  What was surprising to its authors was that this set of
MOLS is not isomorphic to the one given in \cite{FDE}.  They originally
thought that it was, but Fisher apologised for the mistake in \cite{Fapol}.
In fact, these are the first two affine planes discussed at the end of
Section~\ref{sec:2}.

Let us return to orthogonal arrays.
The concept of orthogonal array is the dual notion
(in the sense of reversing the partial order of refinement of partitions)
of the property stated in the first part of Theorem~\ref{th:magic}.
If the orthogonal array is defined by an abelian group, then taking the
dual group 
also reverses the order of refinement.
Hence the dual of each such orthogonal array gives a set of MODS.

In the running example, the dual of the orthogonal array is the set of
subgroups $\langle a \rangle$, $\langle b \rangle$, $\langle c \rangle$,
$\langle d \rangle$, $\langle abcd \rangle$, $\langle ab^2c^3d^4 \rangle$.
These have the property that every subset of four of them generate their direct
product:  in other words, their coset partitions form the minimal non-trivial
partitions in a join semi-lattice.

If we write this in more standard notation over the field $\mathrm{GF}(p)$,
then we have six vectors $(1,0,0,0)$, $(0,1,0,0)$, $(0,0,1,0)$, $(0,0,0,1)$,
$(1,1,1,1)$ and $(1,2,3,4)$ with the property that every four are
linearly independent.  Multiplying any of these vectors by a non-zero
scalar does not affect this, so we are effectively in projective geometry, and
we have six points, any four of which are in general position.

We conclude this section by showing that a regular set of MODS with $r>1$
and $m>2$ does indeed give rise to an orthogonal array.

\begin{proposition}
  Given a regular set of MODS of dimension $m$ and order~$n$
  with $m+r$ partitions, for
$r\ge2$, we can construct from it an orthogonal array of strength $m$ and
index $1$ with $m+r$ factors, each with $n$ levels.
\end{proposition}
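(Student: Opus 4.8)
The plan is to exploit the duality between MODS and orthogonal arrays that was described informally in this section, making it precise in the regular case. By Proposition~\ref{p:mods}, since the set of MODS is regular, every $(m+1)$-tuple of the partitions defines a diagonal semilattice over one and the same abelian group $G$ of order $n$; fix an identification $\Omega = G^m$ under which $Q_1,\ldots,Q_m$ are the coset partitions of the coordinate subgroups $G_1,\ldots,G_m$. Regularity tells us that each of the remaining partitions $Q_{m+1},\ldots,Q_{m+r}$ is also the coset partition of a subgroup $H_k$ of $G^m$ of order $n$, and (as in the proof of Proposition~\ref{p:mods}) each such $H_k$ is a ``diagonal'' subgroup of the form $\{(g^{\alpha_{k,1}},\ldots,g^{\alpha_{k,m}}) : g \in G\}$ for suitable automorphisms $\alpha_{k,j}$ of $G$, with the convention that $G_j$ corresponds to the tuple where $\alpha$ is the identity in position $j$ and ``zero'' elsewhere.

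First I would pass to the dual group $G^*$, which is isomorphic to $G$ and in particular is again abelian of order $n$. The key point is that reversing the refinement order on partitions of $G^m = \Omega$ converts the statement ``any $m$ of $Q_1,\ldots,Q_{m+r}$ are the minimal non-trivial elements of an $m$-dimensional Cartesian lattice'' into the statement ``any $m$ of the dual partitions are the maximal elements of an $m$-dimensional Cartesian lattice'' — which is exactly the defining property of an orthogonal array of strength $m$ and index $1$, as recorded earlier in this section. Concretely, I would define the orthogonal array on the index set $\Omega^* = (G^*)^m$: its $m+r$ factors are indexed by the subgroups $G_1,\ldots,G_m,H_{m+1},\ldots,H_{m+r}$, and the value of the $k$th factor at a point $\chi \in (G^*)^m$ is obtained by evaluating $\chi$ appropriately on the subgroup in question (equivalently, restricting the character $\chi$ of $G^m$ to that subgroup, which yields a character of $G \cong G^*$, i.e.\ an element of an $n$-element alphabet). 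For the coordinate subgroups this recovers the standard coordinate functions, and for a diagonal subgroup $H_k$ the corresponding factor is the character $\chi \mapsto \prod_j \chi_j \circ \alpha_{k,j}$, a ``linear'' combination of the coordinates twisted by the $\alpha_{k,j}$.

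Next I would verify the orthogonal-array property directly from the MODS hypothesis. Fix $m$ of the $m+r$ subgroups; by hypothesis their join generates a Cartesian lattice, and since each has order $n$ and they pairwise commute elementwise (as established in Proposition~\ref{p:mods}), they generate their internal direct product, which is all of $G^m$. Dualising, the $m$ corresponding factor-maps $(G^*)^m \to G^*$ assemble into a group homomorphism $(G^*)^m \to (G^*)^m$ which is an isomorphism (its kernel is the annihilator of $G_{i_1}\cdots G_{i_m} = G^m$, hence trivial); therefore the $m$ chosen factors take every one of the $n^m$ combinations of values exactly once, which is precisely index~$1$ and strength~$m$. Since the number of points is $|(G^*)^m| = n^m$ and there are $m+r$ factors each with $n$ levels, this produces exactly an orthogonal array with the claimed parameters.

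The main obstacle, and the step requiring the most care, is the passage from the honest lattice-theoretic hypothesis on the $Q_i$ (``generate a Cartesian lattice in which they are minimal'') to the clean algebraic statement that the corresponding subgroups of $G^m$ generate their direct product; most of the work has in fact already been done for us, since regularity plus Proposition~\ref{p:mods} gives the group-theoretic picture, and the only genuinely new ingredient here is the observation that dualising an internal direct-product decomposition of $G^m$ yields one of $(G^*)^m$, so that ``minimal elements generating the lattice'' dualises correctly to ``maximal elements generating the lattice'' with index exactly $1$. I would also need to remark that the whole argument reduces, exactly as in Proposition~\ref{p:mods}, to the case $m=3$, $r=2$, since the orthogonal-array condition only constrains $m$-subsets of factors and any $m+1$ of the original partitions already carry the diagonal-semilattice structure over the fixed group $G$.
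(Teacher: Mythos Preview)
Your proposal is correct and follows essentially the same route as the paper: invoke Proposition~\ref{p:mods} to realise the partitions as coset partitions of subgroups of $G^m$ with $G$ abelian, then pass to the dual group and define the $i$th factor on a character $\phi\in(G^m)^*$ as its restriction to $G_i$, using the fact that any $m$ of the subgroups direct-sum to $G^m$ to get strength $m$ and index~$1$. Your closing remark about reducing to $m=3$, $r=2$ is misplaced (that reduction in Proposition~\ref{p:mods} was only for proving $G$ abelian, whereas here the verification must be carried out for the given $m$), but it is an aside that your main argument neither needs nor uses.
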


\begin{proof}
According to Proposition~\ref{p:mods}, a regular set of $m+r$ MODS of order $n$
and dimension $n$, with $m>2$ and $r>1$, is realised by the coset partitions
of $G^m$ by $m+r$ subgroups isomorphic to $G$, where $G$ is an abelian group
of order $n$ (and has three fixed-point-free automorphisms whose product is
the identity).

Now the dual group $(G^m)^*$ defines an orthogonal array of strength $m$ and
index~$1$ over the alphabet $A=G^*$ by the following rule. First identify
each subgroup $G_i$ with $G$ by a fixed isomorphism $\psi_i$. Then take
$\phi\in(G^m)^*$; map $\phi$ to the $n$-tuple $a(\phi)$, where
$a(\phi)_i$ is the restriction of $\phi$ to $G_i$.
For $i=1$, \ldots, $m$, use the isomorphism $\psi_i$ to identify
$a(\phi)_i$ with an element of $G^*$.

Given any set $i_1,\ldots,i_m$ of $m$ distinct indices,
$G$ is the direct sum of the
groups $G_{i_1}, \ldots, G_{i_m}$, and so an element of $G^*$ is uniquely
defined by its restriction to these subgroups; conversely, any choice of
elements of $G_{i_j}^*$ for $j=1,\ldots,m$ defines a unique homomorphism of 
$G$. So we have an orthogonal array, as claimed.
\end{proof}

\section{Frobenius groups}

A \emph{Frobenius group} is a finite group $G$ with a non-trivial proper
subgroup $H$ (called the \emph{Frobenius complement}) such that $H\cap H^g=1$
for all $g\in G\setminus H$, where $H^g$ is the conjugate $g^{-1}Hg$. The
theorem of Frobenius shows that the identity together with elements lying in
no conjugate of $H$ form a normal subgroup $N$, the \emph{Frobenius kernel}.
The celebrated theorem of Thompson asserts that the Frobenius kernel is
nilpotent. 

Alternatively, a Frobenius group is a transitive permutation group $G$ in which
the one-point stabilisers are non-trivial but all two-point stabilisers are
trivial. The one-point stabilisers are the Frobenius complements, and the
Frobenius kernel is a regular normal subgroup.

We refer to Passman~\cite{passman} for an account of this material.

\begin{theorem}
Let $G$ be a Frobenius group whose Frobenius kernel $N$ is abelian, with
Frobenius complement $H$. Then there is a set of $|H|$ MOGS of order $|N|=n$
such that each of the $|H|+2\choose 3$ Latin squares is isotopic to the Cayley
table of $N$.
\end{theorem}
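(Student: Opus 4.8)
The plan is to realise all $|H|+2$ partitions as the coset partitions of $|H|+2$ subgroups of the abelian group $M=N\times N$; orthogonality will then come from pairwise complementation of these subgroups, and the group-table property from a one-line identification of each resulting Latin square.

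\emph{Set-up, and where the hypothesis enters.} Using the permutation description of Frobenius groups recalled above, identify the point set with the regular normal subgroup $N$, with the fixed point of $H$ corresponding to $1\in N$; then $H$ acts on $N$ by conjugation, and triviality of two-point stabilisers says precisely that for $1\ne h\in H$ and $n\in N$ one has $h^{-1}nh=n$ only when $n=1$. Hence $h\mapsto(n\mapsto h^{-1}nh)$ embeds $H$ into $\mathrm{Aut}(N)$ as a fixed-point-free group of automorphisms. Write $N$ additively (it is abelian) and write $h\cdot n$ for the image of $n$ under $h$. For each $h\in H$ put
\[
D_h=\{(h\cdot y,\,-y):y\in N\}\ \le\ M=N\times N ,
\]
a subgroup of order $n=|N|$; closure of $D_h$ under addition is exactly the point where we use that $N$ is abelian. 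Together with $A=N\times\{0\}$ and $B=\{0\}\times N$ we obtain $|H|+2$ subgroups of $M$, each of order $n$.

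First I would check that any two of $A,B,\{D_h:h\in H\}$ intersect trivially, hence (by order) have $M$ as their internal direct sum: this is clear for $A,B$; it follows from injectivity of $y\mapsto h\cdot y$ for $A$ or $B$ against $D_h$; and for $D_{h_1},D_{h_2}$ with $h_1\ne h_2$ a common element forces $(h_2^{-1}h_1)\cdot y=y$ for some $y$, so $y=0$ by fixed-point-freeness. Thus the $|H|+2$ coset partitions of $M$ are such that any two form an $n\times n$ grid, and by the discussion of Section~\ref{sec:2} the $|H|$ of them coming from the $D_h$ constitute $|H|$ mutually orthogonal Latin squares of order $n$. Next I would show that any three of the subgroups, say pairwise-complementary $X,Y,Z\le M$ in any of the three roles, give a Latin square isotopic to the Cayley table of $N$: parametrising cells by $M=X\oplus Y$, the conditions $Z\cap X=Z\cap Y=0$ and $Z+X=Z+Y=M$ let us identify $Z$ with the graph $\{(x,\theta x):x\in X\}$ of an isomorphism $\theta\colon X\to Y$, so the $Z$-partition sends the cell $(x,y)$ to the letter $y-\theta x\in Y$; relabelling rows by $x\mapsto-\theta x$ turns this into $(x,y)\mapsto x+y$, the Cayley table of $Y\cong N$. (Alternatively, the $m=2$ case of Theorem~\ref{t:main} already yields a Latin square unique up to paratopism, and this explicit representative is a group table.) Applying this to every $3$-subset of $\{A,B\}\cup\{D_h:h\in H\}$ makes all ${|H|+2\choose3}$ derived squares isotopic to the Cayley table of $N$.

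I do not expect a serious obstacle here: this is essentially the classical construction of MOLS from a fixed-point-free family of automorphisms. The only two points needing care are (i) extracting from ``$G$ is Frobenius with kernel $N$'' the precise statement that $H$ embeds in $\mathrm{Aut}(N)$ as a fixed-point-free automorphism group (and in particular acts faithfully), and (ii) observing that $D_h$ is a subgroup precisely because $N$ is abelian, so that the hypothesis that the Frobenius kernel is abelian is genuinely used and is indeed necessary for the conclusion as stated. Everything else is routine bookkeeping.
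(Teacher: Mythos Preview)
Your argument is correct, and the underlying construction coincides with the paper's: writing things multiplicatively, your $D_h$-coset partition assigns to the cell $(a,b)\in N\times N$ the letter $a\cdot b^h$, which is exactly the paper's square $L_h$. The difference is in how the MOGS property is verified. The paper works directly with the formula $L_h(x,y)=xy^h$, establishes two symmetries (Note~1), introduces the bijections $\zeta_h(x)=x^{-1}x^h$ and their inverses $\eta_h$ (Note~2), and then carries out an explicit case analysis: Case~1 treats the situation where the original rows are retained but columns are replaced by some $L_k$, and Case~2 treats the situation where rows and columns are both letter-partitions, in each case writing down the isotopism by hand. You instead prove a single uniform lemma---any three pairwise-complementary order-$n$ subgroups $X,Y,Z$ of $N\times N$ yield a square isotopic to the Cayley table of $N$, because $Z$ is the graph of an isomorphism $X\to Y$---and apply it once to all $\binom{|H|+2}{3}$ triples. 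This is cleaner and shorter; the paper's approach, on the other hand, makes the isotopisms completely explicit and shows concretely where the fixed-point-freeness enters in each case (via the invertibility of $\zeta_h$). One small cosmetic point: in your sentence ``relabelling rows by $x\mapsto-\theta x$'', the coordinate $x$ is the one indexing $X$-cosets, so depending on which of $X,Y$ you call rows this is really a relabelling of columns; the mathematics is unaffected.
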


\begin{proof}
Each square has rows and columns indexed by $N$; the squares are indexed by $H$.
The square $L_h$ has $(x,y)$ entry $L_h(x,y)=xy^h$. (This is analogous to the
usual finite field construction of MOLS.) Now $L_h$ is isotopic to the
Cayley table of $N$, since if we relabel the column previously labelled $y^h$
as $y$ then we recover the Cayley table of $N$ (which indeed is $L_1$).

To show orthogonality, take distinct $h,k\in H$ and $a,b\in N$; we need to
show that the equations $xy^h=a$ and $xy^k=b$ have a unique solution
$(x,y)\in N\times N$.
But these equations imply $y^{-1}kh^{-1}y=kb^{-1}ah^{-1}$, so $y$ conjugates
$kh^{-1}$ to $kb^{-1}ah^{-1}$. But the centraliser in $H$ of a non-identity
element of $N$ is trivial. So if $y_1$ and $y_2$ were two such elements, then
$y_1y_2^{-1}$ would commute with $kh^{-1}$, so $y_1=y_2$. Thus $y$, and hence
also $x$, is uniquely determined. (We have proved that there cannot be more
than one solution: now counting shows there is exactly one.)

Now we have to show that, of the $|H|+2$ partitions corresponding to rows,
columns, and the $|H|$ squares, if we choose any two to be new rows and
columns, it is still true that all the squares are isotopic to the Cayley
table of $N$. Recall that two squares are isotopic if there are permutations
of the rows, columns and letters which transform one to the other. So we
need to show that, in each case, there are bijections $\phi,\chi,\psi$ of $N$
such that the entry in row $u$ and column $v$ of the second square is given by
$\psi(\phi(u)\chi(v))$, where inside the brackets we have the group operation
in $N$. Different squares will of course require different choices of
$\phi,\chi,\psi$. We saw an example in the first paragraph, of this proof,
where $\phi$ and $\psi$ are the identity and $\chi(v)=v^h$.

We begin with a couple of observations.

\paragraph{Note 1:}
For any $k\in H$, there is a symmetry which maps $L_h$ to $L_{kh}$ and
conjugates the column labels by $k$; and there is a symmetry which swaps
rows and columns, and replaces $L_h$ by $L_{h^{-1}}$ with its letters
conjugated by $h$ for each $h\in H$.

For the first, $L_{kh}(x,y)=xy^{kh}=x(y^k)^h$. For the second, 
$(yx^{h^{-1}})^h=y^hx=xy^h$.

\paragraph{Note 2:}
For $h\in H$, $h\ne1$, the map $\zeta_h\colon N\to N$ given by
$\zeta_h(x)=x^{-1}x^h$
is a bijection; since $N$ is abelian, it is an automorphism.
Thus we may define $\eta_h\colon N\to N$ to be the inverse of $\zeta_h$.

For suppose that $x^{-1}x^h=y^{-1}y^h$. Then $yx^{-1}=(yx^{-1})^h$. Since
conjugation by $h$ is a fixed-point-free automorphism of $N$, this gives
$yx^{-1}=1$, so $x=y$.

Since $N$ is abelian, 
\begin{eqnarray*}
\zeta_h(xy) &=& y^{-1}x^{-1} x^hy^h,\\
\zeta_h(x)\zeta_h(y) &=& x^{-1}x^hy^{-1}y^h,
\end{eqnarray*}
and the right-hand sides are equal.

\medskip

Now we have to deal with the cases where the two partitions defining the
rows and columns of the square are no longer the original ones.
We have seen in Note~1 that swapping rows and columns gives a symmetry.
Therefore, if we use one of rows and columns, we can assume that it is rows.
Note~1 also shows that if we use the partition corresponding to an element
$h\in H$, we can assume that $h=1$. So there are two cases.

\paragraph{Case 1:} We use the row partition as rows and the partition
corresponding to $h=1$ as columns. Thus, the row and column labels are
$x$ and $xy=z$.

Consider the square corresponding to the former columns, with $(x,z)$ entry
$y$. Since $y=x^{-1}z$, this square is isotopic to the Cayley table of $N$.

Now consider the square $L_h$, with $(x,z)$ entry $xy^h$. Now
\[xy^h=x.x^{-h}z^h=(x^{-1})^{-1}(x^{-1})^hz^h=\zeta_h(x^{-1})z^h.\]
Since inversion, $\zeta_h$, and conjugation by $h$ are bijections, this is an
isotope of the Cayley table of $N$. (Take $\phi(x)=\zeta_h(x^{-1})$,
$\chi(z)=z^h$ and $\psi$ the identity map.)

\paragraph{Case 2:} We use the partition corresponding to the identity as
rows, the partition corresponding to $h$ as columns, and the partition
corresponding to $k$ as letters. Thus, if the corresponding square has
$(u,v)$ entry $w$, then $u=xy$, $v=xy^h$, and $w=xy^k$.

Solving the first two equations for $x$ and $y$ gives
\begin{eqnarray*}
u^{-1}v=y^{-1}y^h=\zeta_h(y),&\hbox{so}&y=\eta_h(u^{-1}v),\\
v^{h^{-1}}u^{-1}=x^{h^{-1}}x^{-1}=\zeta_{h^{-1}}(x),&\hbox{so}&
x=\eta_{h^{-1}}(v^{h^{-1}}u^{-1}).
\end{eqnarray*}
Thus
\[w=xy^k=\eta_{h^{-1}}(v^{h^{-1}}u^{-1})(\eta_h(u^{-1}v))^k.\]
Since $G$ is abelian and $\eta_h$, $\eta_{h^{-1}}$, inversion and conjugation
are isomorphisms, we can write this in the form $w=\phi(u)\chi(v)$, where
\[\phi(u)=\eta_{h^{-1}}(u^{-1})(\eta_h(u^{-1}))^k,\qquad
\chi(v)=\eta_{h^{-1}}(v^{h^{-1}})(\eta_h(v))^k.\]
Taking these functions $\phi$ and $\chi$ and the identity for $\psi$ gives the
required isotopism.
\end{proof}

\section{Higher-dimensional examples}

Let $q$ be a prime power, and let $G$ be the additive group of the finite
field $\GF(q)$ of order $q$. An \emph{arc} in the projective space
$\PG(m-1,q)$ is a set of points, any $m$ of which span the space. It is
called a $k$-arc if its cardinality is $k$.

In  vector space terms, it is a set of $1$-dimensional subspaces of the
$m$-dimensional vector space over $\GF(q)$, such that spanning vectors of
any $m$ of the spaces form a basis for the vector space.

Now it is clear that the coset partitions of any $m$ of these $1$-dimensional
subspaces are the minimal elements of a Cartesian lattice. Thus we have:

\begin{proposition}
If there exists an $(m+r)$-arc in $\PG(m-1,q)$, then $T(m,G)\ge r$.
\end{proposition}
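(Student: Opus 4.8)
The plan is to read off the desired set of MODS directly from the arc. Let $\mathcal{A} = \{\langle v_1\rangle, \ldots, \langle v_{m+r}\rangle\}$ be an $(m+r)$-arc in $\PG(m-1,q)$, viewed as a collection of $1$-dimensional subspaces of the $m$-dimensional $\GF(q)$-vector space $V$, with the property that any $m$ of the spanning vectors $v_i$ form a basis of $V$. First I would set $\Omega = V$, so that $|\Omega| = q^m = n^m$ with $n = q = |G|$, and for each $i$ let $Q_i$ be the partition of $\Omega$ into cosets of the subgroup $U_i = \langle v_i\rangle$. Each $U_i$ is a subgroup of $(V,+) = G^m$ of order $q = n$, so this is a regular family of partitions in the sense of the earlier definition.

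Next I would verify the Cartesian-lattice hypothesis: given any $m$ indices $i_1 < \cdots < i_m$, the vectors $v_{i_1}, \ldots, v_{i_m}$ form a basis of $V$, hence $V = U_{i_1} \oplus \cdots \oplus U_{i_m}$ as an internal direct sum. This is precisely the statement that the coset partitions $Q_{i_1}, \ldots, Q_{i_m}$ are the minimal non-trivial elements of the Cartesian lattice on $\Omega = U_{i_1} \times \cdots \times U_{i_m}$: two vectors lie in the same part of $Q_{i_j}$ iff their coordinates agree in every summand except the $i_j$-th, which is exactly the defining condition $Q_J$ with $J = \{i_j\}$ in the Cartesian-lattice formalism of the introduction. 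So $Q_1, \ldots, Q_{m+r}$ is a set of $r$ MODS of dimension $m$ and order $n = q$.

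It remains to check that any $m+1$ of these partitions define the group $G$, i.e.\ the additive group of $\GF(q)$, which is elementary abelian of order $q$. Fix $m+1$ indices; the corresponding subspaces $U_{i_1}, \ldots, U_{i_{m+1}}$ are $m+1$ one-dimensional subspaces of $V$ in general position, so by Theorem~\ref{t:main} they generate a diagonal semilattice over a uniquely determined group. To identify that group, I would pick the basis $v_{i_1}, \ldots, v_{i_m}$ to coordinatise $V \cong G^m$; then $U_{i_1}, \ldots, U_{i_m}$ become the coordinate subgroups, and $U_{i_{m+1}} = \langle \sum_j \lambda_j v_{i_j}\rangle$ for nonzero scalars $\lambda_j$ (nonzero by general position). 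Rescaling each basis vector $v_{i_j}$ by $\lambda_j$ turns $U_{i_{m+1}}$ into the diagonal subgroup $\delta(G)$, so the configuration is exactly $\mathcal{D}(G,m)$ with $G$ the additive group of $\GF(q)$. Hence each $(m+1)$-subfamily defines a group isomorphic to $G$, so by definition $T(m,G) \ge r$.

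The only mildly delicate point is the bookkeeping in the last step — confirming that the rescaling of basis vectors is an automorphism of the ambient structure that matches the generators of $D(G,m)$ (it is, being a diagonal coordinate-wise automorphism combined with the identification $\Omega = G^m$), so that the abstract group extracted from any $(m+1)$-tuple really is the additive group of the field and not merely some group of order $q$. Everything else is immediate from the definitions and from Theorem~\ref{t:main}.
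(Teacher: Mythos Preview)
Your argument is correct and follows the same route as the paper: take the $1$-dimensional subspaces of the arc, use their coset partitions in $V\cong G^m$, and observe that any $m$ of them give a Cartesian decomposition. The paper's justification is essentially the single sentence preceding the proposition (``it is clear that the coset partitions of any $m$ of these $1$-dimensional subspaces are the minimal elements of a Cartesian lattice''); your write-up supplies the extra verification, via the rescaling argument, that every $(m+1)$-subset yields precisely the diagonal semilattice over the additive group of $\GF(q)$, which is what the definition of $T(m,G)$ demands and which the paper leaves implicit.
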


The maximum cardinality of arcs in finite projective space was first
studied by Segre in the 1950s. In~\cite{segre}, he raised some fundamental
questions which have directed research since. A milestone in their study
was the paper of Blokhuis, Bruen and Thas~\cite{bbt}. We refer to the
recent survey by Ball and Lavrouw~\cite{bl} for further information.

The simplest example is the \emph{normal rational curve}. Let
$a_1,a_2,\ldots,a_q$ be the elements of $\GF(q)$. For $m\le q+1$, consider
the vectors $(1,a_i,a_i^2,\ldots,a_i^{m-1})$ for $i=1,\ldots,q)$ together
with $(0,0,\ldots,1)$. Any $m$ of these vectors form a basis for $\GF(q)^m$.
For if the last vector is not included, then the vectors are the rows of a
Vandermonde matrix, whose determinant is non-zero; the argument is similar
if the last vector is included.

We now present examples in other abelian groups, specifically homocyclic
$p$-groups. Such a group $G$ is a direct power of a cyclic group of prime
power order, say $G=(C_{p^e})^d$. Arcs in projective spaces give examples
with $e=1$, as we have seen. The construction involves lifting to a $p$-adic
number field and taking quotients; all necessary information can be found in
Henri Cohen's book~\cite{cohen}.

Let $q=p^d$. The splitting field of the polynomial $X^q-X$ over the field
$\mathbb{Q}_p$ of $p$-adic numbers is an extension $F$ of $\mathbb{Q}_p$ of
degree $d$. Its integers form a local ring $R$, with maximal ideal $M$
satisfying $R/M\cong\mathbb{F}_q$.

Let $S$ be the set of roots of $X^q-X$. The non-zero elements of $S$ form a
cyclic group of order $q-1$.

For a positive integer $m\le q+1$, the set
\[\{(1,u,u^2,\ldots,u^{m-1}):u\in S\}\cup\{(0,0,\ldots,0,1)\}\]
of vectors in $F^m$ has the property that any $m$ of its elements form a 
basis for $F^m$. The argument is the same as in the finite field case.

Reducing this set of vectors modulo the ideal $M$ gives a set of $q+1$
vectors in $(\mathbb{F}_q)^m$, any $m$ forming a basis for this space. The
$1$-dimensional subspaces they span form the standard representation of the
normal rational curve in $\mathrm{PG}(m-1,q)$.

Now fix an integer $e\ge2$. If we reduce modulo $M^e$, we obtain $q+1$
elements in the group $G^m$, where $G$ is the homocyclic abelian group of
order $q^e$ which is the direct sum of $d$ cyclic groups of order $p^e$ (so
that its Frattini quotient is the additive group of the field of order~$q$). 
We take the $R$-modules generated by these vectors; each is (additively) a
subgroup isomorphic to $G$. Thus, we have a set of $q+1$ subgroups, any $m$ of
which generate their direct sum, and so an example of a regular set of MODS
where all the groups 
are isomorphic to $(C_{p^e})^d$.


\begin{thebibliography}{99}

\bibitem{albert}
A. A. Albert,
Quasigroups, I,
\textit{Trans. Amer. Math. Soc.} \textbf{54} (1943), 507--519.

\bibitem{bcps}
R. A. Bailey, P. J. Cameron, C. E. Praeger and C. Schneider,
The geometry of diagonal groups,
arXiv \texttt{2007.10726}

\bibitem{bl}
S. Ball and M. Lavrouw,
Arcs in finite projective spaces,
\textit{Europ. Math. Soc. Surveys} \textbf{6} (2020), 133--172.

\bibitem{bbt}
A. Blokhuis, A. A. Bruen and J. A. Thas,
On M.D.S. codes, arcs in $\PG(n,q)$ with $q$ even, and a solution of three
fundamental problems of B. Segre,
\textit{Invent. Math.} \textbf{92} (1988), 441--459.

\bibitem{bccsz}
J. N. Bray, Q. Cai, P. J. Cameron, P. Spiga and H. Zhang,
The Hall--Paige conjecture, and synchronization for affine and diagonal groups,
\textit{J. Algebra} \textbf{545} (2020), 27--42.

\bibitem{cohen}
Henri Cohen, \textit{Number Theory, Volume I: Tools and Diophantine Equations},
Graduate Texts in Mathematics \textbf{239}, Springer, New York, 2007.

\bibitem{dk}
J. D\'enes and A. D. Keedwell,
\textit{Latin Squares and their Applications},
Akad\'emai Kaid\'o, Budapest, and English Universities Press, London, 1974.

\bibitem{ds}
Steven T. Dougherty and Theresa A. Szczepanski,
Latin $k$-hypercubes,
\textit{Australas. J. Combinatorics} \textbf{40} (2008), 145--160.

\bibitem{EW}
Judith Egan and Ian M. Wanless,
Enumeration of MOLS of small order,
\textit{Mathematics of Computation},
\textbf{85} (2016), 799--824.

\bibitem{evans}
Anthony B. Evans,
The admissibility of sporadic simple groups,
\textit{J. Algebra} \textbf{321} (2009), no. 1, 105--116.

\bibitem{FDE}
R. A. Fisher,
\textit{The Design of Experiments},
Oliver \& Boyd, Edinburgh, 1935.
  
  
\bibitem{Fapol}
R. A. Fisher,
Completely orthogonal $9\times 9$ Latin squares. A correction,
\textit{Annals of Eugenics} \textbf{11} (1942), 402--403.
  
\bibitem{FY}
R. A. Fisher and F. Yates,
\textit{Statistical Tables for Biological, Agricultural and Medical
Research},
Oliver \& Boyd, Edinburgh, 1938.

\bibitem{fhw}
Nevena Franceti\'c, Sarada Herke, and Ian M. Wanless,
Parity of sets of mutually orthogonal Latin squares,
\textit{J. Combinatorial Theory} (A) \textbf{155} (2018), 67--99.
    
\bibitem{frolov}
M. Frolov,
Recherches sur les permutations carr\'ees,
\textit{J. Math. Sp\'ec.} (3) \textbf{4} (1890), 8--11.

\bibitem{hp}
Marshall Hall Jr. and L. J. Paige,
Complete mappings of finite groups,
\textit{Pacific J. Math.} \textbf{5} (1955), 541--549.

\bibitem{HSS}
A.~S.~Hedayat, N.~J.~A.~Sloane and John Stufken,
\textit{Orthogonal Arrays: Theory and Applications},
Springer Series in Statistics, Springer-Verlag, New York, 1999.  

\bibitem{mw}
Brendan D. McKay and Ian M. Wanless,
A census of small Latin hypercubes.
\textit{Siam J.\ Discrete Math.} \textbf{22} (2008), 719--736.

\bibitem{mullen}
Gary L. Mullen,
Orthogonal hypercubes and related designs,
\textit{J. Statist. Planning Inference} \textbf{73} (1998), 177--188.

\bibitem{OP1}
P.~J.~Owens and D.~A.~Preece,
Complete sets of pairwise orthogonal Latin squares of order~$9$, 
\textit{J. Combin. Math. Combin. Comput.} \textbf{18} (1995), 83--96. 

\bibitem{OP2}
P.~J.~Owens and D.~A.~Preece,
Aspects of complete sets of $9 \times 9$ pairwise orthogonal Latin squares, 
\textit{Discrete Math.} \textbf{167/168} (1997), 519--525.

\bibitem{passman}
D. S. Passman,
\textit{Permutation Groups},
Dover Publications
(revised republication of the work orginally published in 1968 by the
W.~A.~Benjamin Company),
New York, 2012.

\bibitem{potapov}
Vladimir N. Potapov,
Constructions of pairs of orthogonal Latin cubes,
\textit{J. Combinatorial Designs} \textbf{28} (2020), 604--613.

\bibitem{segre}
B. Segre, 
Curve razionali normali e $k$-archi negli spazi finiti,
\textit{Ann. Mat. Pura Appl.} (4) \textbf{39} (1955), 357--379.

\bibitem{trenkler}
Mari\'an Trenkler,
On orthogonal Latin $p$-dimensional cubes,
\textit{Czechosolvak Math. J.} \textbf{55} (2005), 725--728.

\bibitem{wilcox}
Stewart Wilcox,
Reduction of the Hall--Paige conjecture to sporadic simple groups,
\textit{J. Algebra} \textbf{321} (2009), no. 5, 1407--1428.

\end{thebibliography}
\end{document}